\documentclass[10pt]{amsart}
\usepackage{amsmath,amscd}
\usepackage{amsbsy}
\usepackage{amssymb}
\usepackage{amscd,amsthm}
\usepackage[all,cmtip]{xy}
\usepackage[russian,english]{babel}
\usepackage[utf8]{inputenc}
\usepackage{xcolor}
\usepackage{dsfont}

\usepackage{hyperref}

\renewcommand{\epsilon}{\varepsilon}

\renewcommand{\ell}{x}

\newtheorem{thm}{Theorem}\numberwithin{thm}{section}

\begin{document}
	\begin{center}
		\huge{A comment on the number of $k$-th powers inside arithmetic progressions}\\[1cm] 
	\end{center}
	\begin{center}
		\large{Sa$\mathrm{\check{s}}$a Novakovi$\mathrm{\acute{c}}$}\\[0,5cm]
		{\small July 2026}\\[0,2cm]
	\end{center}
		{\small \textbf{Abstract}. 
			In \cite{BD} Bourgain and Demeter found sharp upper bounds for the number of $k$-th powers inside arbitrary arithmetic progressions whose step has $O(1)$ many divisors. We make the easy observation that the same arguments are still valid if the step does not grow too rapidly in relation to the length of the progression. Furthermore, we give sharp bounds for the number of $k$-th powers among the first $N$ terms for $N$ large enough. Both results should be known. Nevertheless, we add to the literature.}
		\section{Introduction}
		\noindent
For fixed $k\ge 2$, how many $k$-th powers of integers can lie inside an arithmetic progression of length $N$? By considering the progression $\{1,2,\ldots, N\}$ we see that this number can be as large as $\sim N^{\frac1k}$. More precisely, let $\mathcal{Q}_k(N;q,a)$ denote the number of $k$-th powers  in the arithmetic progression $a+q,a+2q,\ldots,a+Nq$. Write
$$\mathcal{Q}_k(N)=\sup_{a,q\in\mathbb{N}\atop{q\not=0}}\mathcal{Q}_k(N;q,a).$$
Rudin \cite{R} has conjectured that $\mathcal{Q}_2(N)\sim N^{\frac12}$ and Bombieri, Granville and Pintz \cite{BGP} that 
$$
\mathcal{Q}_k(N)\lesssim N^{(\frac12+\epsilon)},
$$
for $k\geq 4$. Bourgain and Demeter \cite{BD} similarly conjectured that for each $k\ge 2$
\begin{equation}
	\label{ucuycfiedoie}
	\mathcal{Q}_k(N)\lessapprox N^{\frac1k},
\end{equation}
where $\lessapprox$ denotes logarithmic losses of the form $(\log N)^{O(1)}$. The logarithmic loss here is added for extra safety and up to now, it is not clear whether it is really needed.

The best known upper bound for $\mathcal{Q}_2(N)$ is due to Bombieri and Zannier \cite{BZ}
$$\mathcal{Q}_2(N)\lessapprox N^{\frac35}.$$
This builds on earlier work \cite {BGP} of Bombieri, Granville and Pintz that proved the result with exponent $\frac23$ in place of $\frac35$. These rely on deep results in number theory regarding rational points on  curves. 
The papers \cite{G} and \cite{CG} contain a nice discussion on the problem.
In the special case when the step $q$ of the progression has $O(1)$ many divisors, Bourgain and Demeter proved the following:
\medskip

\begin{thm}[\cite{BD}, Theorem 0.1]
		Let $d(q)$ be the number of divisors of $q$. Then for each polynomial $P_k$ of degree $k\ge 1$ with integer coefficients and each $a,q,N\in\mathbb{Z}$ we have   
	$$
	|\{t\in \mathbb{Z}:\;P_k(t)\in\{a+q,a+2q,\ldots,a+Nq\}\}|\lesssim d(q)^{k-1}N^{\frac1k}.
	$$
	The implicit constant depends only on $k$.
\end{thm}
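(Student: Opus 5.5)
We argue by induction on the degree $k$, using the factorization $P_k(t)-P_k(s)=(t-s)Q_s(t)$. Divisibility by $q$ is split between the two factors according to $g=\gcd(t-s,q)$, and there are $d(q)$ choices for $g$. At each stage we balance a trivial spacing bound against the inductive bound for the lower-degree factor.

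\textbf{Base case $k=1$.} Write $P_1(t)=ct+b$ with $c\neq 0$. The condition is $ct\equiv a-b \pmod q$ together with $ct+b\in[a+q,a+Nq]$. The first condition puts $t$ in a single residue class modulo $q/\gcd(c,q)$, provided it is solvable at all. The second confines $t$ to a real interval of length at most $Nq/|c|$. Hence the count is at most $N\gcd(c,q)/|c|+1\le N+1\lesssim N$, which is the claim with $d(q)^0$.

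\textbf{Inductive step.} Let $k\ge2$ and fix one solution $s$. For any other solution $t$ we have $q\mid P_k(t)-P_k(s)=(t-s)Q_s(t)$, where $Q_s$ has integer coefficients and degree $k-1$ in $t$. Moreover $|(t-s)Q_s(t)|\le Nq$, since both values lie in the progression.

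\begin{enumerate}
\item Put $g=\gcd(t-s,q)$. Then $(q/g)\mid Q_s(t)$, and there are at most $d(q)$ possible values of $g$. Fix $g$.
\item Split the solutions dyadically according to $|t-s|\sim H$. On such a block two constraints hold.
\begin{itemize}
\item Since $t\equiv s\pmod g$ and $|t-s|\lesssim H$, the number of such $t$ is at most $\lesssim H/g+1$.
\item $Q_s(t)$ is a multiple of $q/g$ with $|Q_s(t)|\le Nq/H$. So $Q_s(t)$ lies in an arithmetic progression of step $q/g$ and length $\lesssim Ng/H+1$. Applying the induction hypothesis to $Q_s$ gives at most $\lesssim d(q)^{k-2}(Ng/H)^{1/(k-1)}$ such $t$. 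In the degenerate range $Ng/H<1$ we must have $Q_s(t)=0$, which allows at most $k-1$ values of $t$.
\end{itemize}
\item Combine the two bounds on each block via $\min(A,B)\le A^{1/k}B^{(k-1)/k}$:
$$\min\Big(\tfrac Hg,\;(\tfrac{Ng}{H})^{\frac1{k-1}}\Big)\le \big(\tfrac Hg\big)^{\frac1k}\big(\tfrac{Ng}H\big)^{\frac1k}=N^{\frac1k}.$$
\end{enumerate}

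\textbf{Main obstacle: removing the dyadic logarithm.} Summing over dyadic $H$ naively would cost a factor $\log N$. The point is that the first term of the minimum is geometrically increasing in $H$ and the second geometrically decreasing. So the dyadic sum is dominated by the crossover scale, where the value is $N^{1/k}$. We must also check two things:
\begin{itemize}
\item the additive "$+1$" terms and the degenerate range contribute only $O_k(1)$ per relevant $g$;
\item the induction hypothesis may legitimately be applied to the progression containing $Q_s(t)$, which has step $q/g$, and its divisor count satisfies $d(q/g)\le d(q)$.
\end{itemize}
Summing over the at most $d(q)$ choices of $g$ then yields $d(q)\cdot d(q)^{k-2}N^{1/k}=d(q)^{k-1}N^{1/k}$. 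All implicit constants depend only on $k$, as claimed.
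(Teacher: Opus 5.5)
Your argument is correct and is essentially the Bourgain--Demeter induction that the paper reproduces in the proof of Theorem 1.2: your $g=\gcd(t-s,q)$ is their $q_1$, and $n_1=(t-s)/g$, $n_2=Q_s(t)/(q/g)$ satisfy $|n_1n_2|\le N$. The only difference is that the paper uses a single dichotomy instead of your dyadic decomposition in $|t-s|$: either $|n_1|\le N^{1/k}$ (at most $2N^{1/k}+1$ values of $t$) or $|n_2|\le N^{(k-1)/k}$ (the induction hypothesis gives $\lesssim d(q)^{k-2}N^{1/k}$), which is exactly your crossover scale, so the logarithmic loss you work to remove never arises.
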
 
The proof uses a nice induction argument. In this short remark, we observe that the same induction argument can be used to prove a similar statement in the case where the step $q$ satisfies $d(q)\lesssim N^{\frac 1{k(k+1)}}\cdot \mathrm{log}(N)^{O(1)}$. In fact, we observe:
\begin{thm}
	Let $d(q)$ be the number of divisors of $q$ and assume that $d(q)\lesssim N^{\frac 1{k(k+1)}}\cdot \mathrm{log}(N)^{O(1)}$ . Then for each polynomial $P_k$ of degree $k\ge 1$ with integer coefficients and each $a,q,N\in\mathbb{Z}$ we have   
	$$
	|\{t\in \mathbb{Z}:\;P_k(t)\in\{a+q,a+2q,\ldots,a+Nq\}\}|\lessapprox N^{\frac2k}.
	$$
	The implicit constant depends only on $k$.
\end{thm}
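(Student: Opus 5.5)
The plan is to deduce the statement directly from Theorem 0.1 of Bourgain and Demeter quoted above, so that the induction argument need not be rerun. Theorem 0.1 holds for every $a,q,N$ and every integer polynomial $P_k$ of degree $k$, with an implicit constant depending only on $k$. It gives
$$
|\{t\in \mathbb{Z}:\;P_k(t)\in\{a+q,a+2q,\ldots,a+Nq\}\}|\lesssim d(q)^{k-1}N^{\frac1k}.
$$

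Next I insert the hypothesis $d(q)\lesssim N^{\frac 1{k(k+1)}}\cdot \mathrm{log}(N)^{O(1)}$. Raising it to the power $k-1$ gives
$$
d(q)^{k-1}\lesssim N^{\frac{k-1}{k(k+1)}}\cdot \mathrm{log}(N)^{O(1)},
$$
where the implicit constants and the $O(1)$ in the logarithmic exponent depend only on $k$ and on the constants in the hypothesis. Combining this with the previous display, the count is at most a constant times
$$
N^{\frac{k-1}{k(k+1)}+\frac1k}\,\mathrm{log}(N)^{O(1)}.
$$

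It remains to compare exponents:
$$
\frac{k-1}{k(k+1)}+\frac1k=\frac{(k-1)+(k+1)}{k(k+1)}=\frac{2}{k+1}\le \frac2k.
$$
The count is therefore $\lessapprox N^{\frac{2}{k+1}}\le N^{\frac2k}$ for $N\ge 1$, with the logarithmic losses absorbed into $\lessapprox$. This proves the statement, and in fact the slightly stronger exponent $\frac{2}{k+1}$. For $k=1$ the hypothesis gives $d(q)^{0}=1$ and the bound reduces to $N$, which is consistent with the statement.

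I expect no real obstacle; the only care needed is bookkeeping. The implicit constant in the hypothesis on $d(q)$ propagates through the power $k-1$, so the final constant depends only on $k$ and the hypothesis constant. The small values of $N$ where $\log N$ vanishes or is negative should be handled separately, either by reading $\log$ as $\log(2+N)$ or by bounding the count trivially when $N$ is bounded. If one instead wanted to follow the paper's remark and rerun the Bourgain–Demeter induction on $k$ with the weaker divisor hypothesis, the delicate point would be tracking how the factor $d(q)$ accumulates at each induction step. The direct reduction above avoids that entirely.
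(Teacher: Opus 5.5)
Your argument is correct and is essentially the paper's. The paper's ``induction'' applies the hypothesis in the Bourgain--Demeter form $O(d(q_2)^{k-1}N^{\frac1{k+1}})$, not in the form $N^{2/k}$ being proved, so it merely re-derives Theorem 0.1 (the bound $d(q)^kN^{\frac1{k+1}}$ in degree $k+1$) and then substitutes the divisor bound, exactly as you do by citing Theorem 0.1 directly. The only difference is bookkeeping: the paper uses the exponent $\frac1{k(k+1)}$ when the degree is $k+1$, landing exactly on the stated exponent $2/\text{degree}$, whereas you use it in degree $k$ as the statement literally reads and obtain the stronger exponent $\frac2{k+1}$.
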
 
\begin{proof}
	We use induction on $k$. The case $k=1$ is obvious. We may assume the statement holds for $k>1$. Let $P_{k+1}$ be a polynomial of degree $k+1$. Fix $t_0\neq t$ such that
		$$
		P_{k+1}(t),P_{k+1}(t_0)\in\{a+q,a+2q,\ldots,a+Nq\}.
		$$
	Write $$P_{k+1}(t)-P_{k+1}(t_0)=(t-t_0)P_k(t),$$
	for some polynomial $P_k$ of degree $k$.
	Notice that we must have
	\begin{equation}
		\label{wqwdiouc7rcyf9ex}
		\begin{cases}(t-t_0)=n_1q_1\\P_k(t)=q_2n_2\end{cases}
	\end{equation}
	with  $q_1q_2=q$. Moreover $n_1n_2\le N$. This yields that either $$n_1\le N^{\frac1{k+1}}$$
	or 
	$$n_2\le N^{\frac{k}{k+1}}.$$
	Let us fix the pair $(q_1,q_2)$. In the first case, there are $\le N^{\frac1{k+1}}$ possible values of $t$ (considering only the first equation in \eqref{wqwdiouc7rcyf9ex}), while in the second case there are $O(d(q_2)^{k-1}N^{\frac1{k+1}})$ values of $t$, due to the induction hypothesis (considering only the second equation in \eqref{wqwdiouc7rcyf9ex}). Since there are $d(q)$ many ways to choose the pair $(q_1,q_2)$ we conclude that the total contribution is 
	$$\lesssim d(q)(N^{\frac1{k+1}}+d(q)^{k-1}N^{\frac1{k+1}})\lesssim d(q)^kN^{\frac1{k+1}}.$$
	By assumption, we have
	$$
	d(q)\lesssim N^{\frac 1{k(k+1)}}\cdot \mathrm{log}(N)^{O(1)}.
	$$
	But this shows that the total contribution is actually
	$$
	\lesssim d(q)(N^{\frac1{k+1}}+d(q)^{k-1}N^{\frac1{k+1}})\lesssim d(q)^kN^{\frac1{k+1}}\lesssim N^{\frac2{k+1}}\cdot \mathrm{log}(N)^{O(1)}.
	$$
\end{proof}
\noindent
In \cite{HP} Hajdu and Papp gave bounds for the number of $k$-th powers and arbitrary powers among the first $N$ terms of an arithmetic progression, for $N$ large enough. Write $P_{a,q;N}(k)$ for the number of $k$-th powers among the first $N$ terms of the arithmetic progression $a+qx\quad (x\geq 0)$. Among others, they proved the following result.
\begin{thm}[\cite{HP}, Theorem 2.1]
	For every $\epsilon >0$ there is an $k_0$ depending on $\epsilon$ such that for any $k>k_0$ we have $P_{a,q;N}(k)\leq (1+\epsilon)N^{\frac 1k}$, whenever $N>N_0$. Here $N_0=N_0(\epsilon,k,a,q)$. 
\end{thm}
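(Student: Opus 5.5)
The plan is to reduce the count to a local density at each prime and then show that, once $k$ is large, this density is at most $(1+\epsilon)$ times the density of the trivial progression. Fix $a,q$ and $k$. A term $a+qx$ with $0\le x<N$ is a $k$-th power $y^k$ exactly when $y^k\equiv a \pmod q$ and $y^k$ lies in the interval $[a,a+q(N-1)]$. Distinct $k$-th power values correspond to distinct $y$ if we take $y\ge 0$ when $k$ is even, and arbitrary $y\in\mathbb{Z}$ when $k$ is odd. In either case the admissible $y$ fill an interval $I_N$ of length $(qN)^{1/k}+O_{a,q,k}(1)$. Let
$$\rho(q,a)=\#\{y \bmod q:\ y^k\equiv a \pmod q\}.$$
Splitting $I_N$ into blocks of length $q$ gives
$$P_{a,q;N}(k)\le \rho(q,a)\Bigl(\frac{(qN)^{1/k}}{q}+O(1)\Bigr)=\frac{\rho(q,a)}{q^{1-1/k}}\,N^{\frac1k}+O_{a,q,k}(1).$$
The $O(1)$ term is $o(N^{1/k})$, and this is where the threshold $N_0(\epsilon,k,a,q)$ comes from. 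So it suffices to prove that, for $k>k_0(\epsilon)$ and all $a,q$,
$$R(q,a):=\frac{\rho(q,a)}{q^{1-1/k}}\le 1+\tfrac{\epsilon}{2}.$$

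By the Chinese remainder theorem $R$ is multiplicative in $q$, so I will bound the local factors $R(p^m,a)$. I treat three cases.

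\emph{Case $p^m\mid a$.} Here $y^k\equiv 0 \pmod{p^m}$ means $p^{\lceil m/k\rceil}\mid y$. Hence $\rho=p^{m-\lceil m/k\rceil}\le p^{m-m/k}$, so the factor is at most $1$. Equality when $k\mid m$ shows the constant $1$ cannot be improved.

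\emph{Case $p^j\,\|\,a$ with $0<j<m$.} Either there are no solutions, or $k\mid j$ and the equation reduces to the unit case modulo $p^{m-j}$ with $y=p^{j/k}y'$. I expect this to give a factor no larger than in the unit case.

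\emph{Case $p\nmid a$.} Since the group $(\mathbb{Z}/p^m)^\times$ is cyclic, or nearly so for $p=2$, we get $\rho\le 2\gcd(k,\varphi(p^m))$. This yields
$$R(p^m,a)\le \min\Bigl(2k,\ \varphi(p^m)\Bigr)\,p^{-m(1-1/k)}.$$
From this bound I read off two facts. If $p^m>4k^2$, the factor is below $1$, in fact tiny. If $p^m\le 4k^2$, the factor is at most $(1-1/p)\,p^{m/k}\le(1-1/p)(4k^2)^{1/k}=(1-1/p)(1+O(\log k/k))$. This is $<1$ unless $p\gg k/\log k$.

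The main obstacle is controlling the product of the local factors that may exceed $1$. Each such factor is only $1+O(\log k/k)$, but there could be many of them. My first attempt uses the sharper bound $\rho\le\gcd(k,p-1)$ for odd $p\nmid a$ with $p\nmid k$. A factor can exceed $1$ only if $\gcd(k,p-1)>p^{1-1/k}$, which essentially forces $p-1\mid k$ and $p\le k+1$. In that situation the factor is $(1-1/p)p^{1/k}$.

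The remaining step is to bound the product over the factors exceeding $1$. Each exceeding factor is at most $1+(\log p)/k\le 1+(\log(k+1))/k$. There are at most $d(k)$ primes with $p-1\mid k$, and $d(k)=k^{o(1)}$. So the product of these factors is at most $\exp\bigl(d(k)\log(k+1)/k\bigr)\to 1$. Small factors from other primes, including $p=2$, only help. Once this is established, $R(q,a)\le 1+\epsilon/2$ for $k>k_0(\epsilon)$, and the displayed inequality gives $P_{a,q;N}(k)\le(1+\epsilon)N^{1/k}$ for $N>N_0$.
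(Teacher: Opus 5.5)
The paper gives no proof of this statement. It is quoted from Hajdu--Papp as background. The paper's own method (the Bourgain--Demeter induction plus divisor bounds) only produces bounds of the shape $N^{2/k}$ or $N^{1/k+\epsilon}$ with unspecified constants, so it could not yield the sharp constant $1+\epsilon$. There is nothing to compare against, so I judge your argument on its own.

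Your route is sound and essentially complete. The reduction $P_{a,q;N}(k)\le R(q,a)N^{1/k}+O_{a,q,k}(1)$, with $R(q,a)=\rho(q,a)/q^{1-1/k}$ multiplicative, correctly isolates a local problem that is uniform in $(a,q)$. The $O(1)$ term is exactly what $N_0(\epsilon,k,a,q)$ absorbs. The case $p^m\mid a$ is right. The prime $p=2$ is handled by your general bound, since $\tfrac12(4k^2)^{1/k}<1$ for $k\ge 9$.

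Three points need tightening.

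\textbf{(1) Primes dividing $k$.} Your final count only covers odd $p\nmid k$. For odd $p\mid k$ with $p\nmid a$ you only have the bound $(1-1/p)(4k^2)^{1/k}$, which can exceed $1$. So ``other primes only help'' is unjustified as written. The fix is to run your gcd argument for every odd $p\nmid a$ and every $m$.
\begin{itemize}
\item Let $g=\gcd(k,\varphi(p^m))\ge\rho(p^m,a)$. If $g<\varphi(p^m)$, then $g\le\varphi(p^m)/2$.
\item Then $R(p^m,a)>1$ would force $p^{m/k}>2$, hence $p^{m(1-1/k)}>2^{k-1}\ge k\ge g$, a contradiction.
\item So a factor exceeds $1$ only if $\varphi(p^m)\mid k$. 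This gives $p-1\mid k$ (at most $d(k)$ primes) and $p^m\le \tfrac32 k\le 2k$.
\item Each such factor is therefore at most $(2k)^{1/k}$, and their product is at most $(2k)^{d(k)/k}\to 1$.
\end{itemize}
Cruder alternative: add the at most $\omega(k)$ primes dividing $k$, each contributing at most $(4k^2)^{1/k}$.

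\textbf{(2) The case $p^j\,\|\,a$ with $0<j<m$.} This should be an identity, not an expectation. If $k\nmid j$ there are no solutions. If $k\mid j$, write $y=p^{j/k}y'$ with $y'$ a unit modulo $p^{m-j/k}$, constrained only modulo $p^{m-j}$. This gives $\rho(p^m,a)=p^{j(1-1/k)}\rho(p^{m-j},a/p^j)$, i.e. $R(p^m,a)=R(p^{m-j},a/p^j)$, exactly the unit case.

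\textbf{(3) A reversed inequality.} The inequality $p^{1/k}\le 1+(\log p)/k$ goes the wrong way, since $e^x\ge 1+x$. It is harmless, because what you actually use is
$\prod p^{1/k}=\exp\bigl(\sum(\log p)/k\bigr)\le\exp\bigl(d(k)\log(k+1)/k\bigr)$.

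With these repairs, $\sup_{a,q}R(q,a)\le 1+o(1)$ as $k\to\infty$, and the statement follows.
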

\noindent
In a similar spirit, we make the following observation.
\begin{thm}
		Let $r>0$ be a positive integer and assume $q\lesssim N^r$. For each $\epsilon >0$ and each polynomial $P_k$ of degree $k\ge 1$ with integer coefficients there exists a $N_0$ depending on $\epsilon$ and $k$ such that for each $a,q,N\in\mathbb{Z}$ we have   
	$$
	|\{t\in \mathbb{Z}:\;P_k(t)\in\{a+q,a+2q,\ldots,a+Nq\}\}|\lesssim N^{(\frac 1k + \epsilon)}.
	$$
	whenever $N> N_0$. The implicit constant depends only on $k$.
\end{thm}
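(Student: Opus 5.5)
The plan is to derive the statement directly from the Bourgain--Demeter bound (\cite{BD}, Theorem 0.1) together with the classical divisor bound. We do not repeat the induction from the proof of the preceding theorem. The point is that the hypothesis $q\lesssim N^r$ converts the factor $d(q)^{k-1}$ in Theorem 0.1 into a subpolynomial power of $N$. For $k=1$ the statement is trivial: the set in question has at most $N$ elements. So we assume $k\ge 2$.

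\emph{Step 1.} Fix $\epsilon>0$ and put $\delta=\epsilon/(2r(k-1))$. We use the divisor bound: for every $\delta>0$ there is a constant $C(\delta)$ such that $d(q)\le C(\delta)\,q^{\delta}$ for all $q\ge 1$. Combined with $q\le C_0N^r$, where $C_0$ is the implicit constant in the hypothesis, this gives
$$
d(q)\le C(\delta)C_0^{\delta}N^{r\delta}.
$$

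\emph{Step 2.} We apply Theorem 0.1 of \cite{BD}, whose implicit constant $c_k$ depends only on $k$. This yields
$$
|\{t\in\mathbb{Z}:\;P_k(t)\in\{a+q,\ldots,a+Nq\}\}|\le c_k\,d(q)^{k-1}N^{\frac1k}\le c_k\,\bigl(C(\delta)C_0^{\delta}\bigr)^{k-1}N^{\frac1k+\frac{\epsilon}{2}}.
$$

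\emph{Step 3.} We absorb the constant $\bigl(C(\delta)C_0^{\delta}\bigr)^{k-1}$ into the remaining factor $N^{\epsilon/2}$. To do this, choose $N_0$ so large that
$$
N^{\epsilon/2}\ge \bigl(C(\delta)C_0^{\delta}\bigr)^{k-1}\quad\text{for all } N>N_0.
$$
Then for $N>N_0$ the count is at most $c_k N^{\frac1k+\epsilon}$. The final implicit constant $c_k$ depends only on $k$, as claimed.

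The only real subtlety, rather than a genuine obstacle, is the bookkeeping of constants. The divisor-bound constant $C(\delta)$ depends on $\epsilon$, $k$ and $r$, and the constant $C_0$ comes from the hypothesis $q\lesssim N^r$. Both must be pushed into the choice of $N_0$ and not into the implicit constant of the final inequality. Consequently $N_0$ depends on $\epsilon$, $k$, $r$ and $C_0$. Since $r$ and $C_0$ are fixed by the hypothesis, this is consistent with the statement's claim that $N_0$ depends on $\epsilon$ and $k$ once the data $r$ and $C_0$ are regarded as fixed. Note also that the argument is uniform in $a$ and in the polynomial $P_k$, because Theorem 0.1 is.
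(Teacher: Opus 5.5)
Your argument is correct and is essentially the paper's. The paper re-runs the Bourgain--Demeter induction to reach the bound $d(q)^{k-1}N^{1/k}$ (written there as $d(q)^kN^{1/(k+1)}$) and then absorbs the divisor factor into $N^{\epsilon}$ via Wigert's bound; you cite Theorem 0.1 directly and use the weaker estimate $d(q)\le C(\delta)q^{\delta}$, which suffices. Your remark that $N_0$ also depends on $r$ and on the implied constant in $q\lesssim N^r$ is accurate, and the same dependence is implicit in the paper's choice of $N_0$ from $(cN^r)^{(\log 2+O(1))k/\log\log(cN^r)}\le N^{\epsilon}$.
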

\begin{proof}
		Again we use induction on $k$. The case $k=1$ is obvious. The rest of the proof is very similar to the proof of Theorem 1.2. Nevertheless, we all the details. So, we may assume the statement holds for $k>1$. Let $P_{k+1}$ be a polynomial of degree $k+1$. Fix $t_0\neq t$ such that
	$$
	P_{k+1}(t),P_{k+1}(t_0)\in\{a+q,a+2q,\ldots,a+Nq\}.
	$$
	Write $$P_{k+1}(t)-P_{k+1}(t_0)=(t-t_0)P_k(t),$$
	for some polynomial $P_k$ of degree $k$.
	Notice that we must have
	\begin{equation}
		\label{wqwdiouc7rcyf9ex}
		\begin{cases}(t-t_0)=n_1q_1\\P_k(t)=q_2n_2\end{cases}
	\end{equation}
	with  $q_1q_2=q$. Moreover $n_1n_2\le N$. This yields that either $$n_1\le N^{\frac1{k+1}}$$
	or 
	$$n_2\le N^{\frac{k}{k+1}}.$$
	Let us fix the pair $(q_1,q_2)$. In the first case, there are $\le N^{\frac1{k+1}}$ possible values of $t$ (considering only the first equation in \eqref{wqwdiouc7rcyf9ex}), while in the second case there are $O(d(q_2)^{k-1}N^{\frac1{k+1}})$ values of $t$, due to the induction hypothesis (considering only the second equation in \eqref{wqwdiouc7rcyf9ex}). Since there are $d(q)$ many ways to choose the pair $(q_1,q_2)$ we conclude that the total contribution is 
	$$\lesssim d(q)(N^{\frac1{k+1}}+d(q)^{k-1}N^{\frac1{k+1}})\lesssim d(q)^kN^{\frac1{k+1}}.$$
By a classical result of Wigert \cite{W}, we have 
	$$
	\underset{q\leq cN^r}{\mathrm{max}}\ d(q)\leq (cN^r)^{\frac{\mathrm{log}(2)+O(1)}{\mathrm{log}\mathrm{log}(cN^r)}}.
	$$
	But this shows that 
	$$
	d(q)^k\leq (cN^r)^{\frac{(\mathrm{log}(2)+O(1))k}{\mathrm{log}\mathrm{log}(cN^r)}}.
	$$
	By taking the logarithm on both sides, it is easy to see that for a given $\epsilon >0$, there is a $N_0$ depending on $\epsilon$ and $k$ such that 
	$$
	(cN^r)^{\frac{(\mathrm{log}(2)+O(1))k}{\mathrm{log}\mathrm{log}(cN^r)}}\leq N^{\epsilon}
	$$
	whenever $N>N_0$. This shows that the total contribution is
	$$
	\lesssim d(q)(N^{\frac1{k+1}}+d(q)^{k-1}N^{\frac1{k+1}})\lesssim d(q)^kN^{\frac1{k+1}}\lesssim N^{(\frac 1{k+1}+ \epsilon)}.
	$$
\end{proof}

		\vspace{0.3cm}
		\noindent
		{\tiny HOCHSCHULE FRESENIUS UNIVERSITY OF APPLIED SCIENCES 40476 D\"USSELDORF, GERMANY.}\\
		E-mail adress: sasa.novakovic@hs-fresenius.de\\
		
	\end{document}